\newtheorem{df}{Definition}[section]
\newtheorem{prop}[df]{Proposition}
\newtheorem{lem}[df]{Lemma}
\newtheorem{thm}[df]{Theorem}
\newtheorem{cor}[df]{Corollary}
\newtheorem{ex}[df]{Example}
\newtheorem{rem}[df]{Remark}
\def\Z{\mathbbm Z}
\def\C{\mathbbm C}
\def\id{\operatorname{id}}
\journal{arXiv.org}
\begin{document}
\begin{frontmatter}
\title{A nice acyclic matching on the nerve of the partition lattice}
\author[ruelle]{Ralf Donau}
\address[ruelle]{Fachbereich Mathematik, Universit\"at Bremen, Bibliothekstra\ss e 1, 28359 Bremen, Germany}
\ead{ruelle@math.uni-bremen.de}
\begin{abstract}
The author has already proven that the space $\Delta(\Pi_n)/G$ is homotopy equivalent to a wedge of spheres of dimension $n-3$ for all natural numbers $n\geq 3$ and all subgroups $G\subset S_1\times S_{n-1}$. We construct an $S_1\times S_{n-1}$-equivariant acyclic matching on $\Delta(\Pi_n)$ together with a description of its critical simplices. This is also a more elementary approach to determining the number of spheres. We also develop new methods for Equivariant Discrete Morse Theory by adapting the Patchwork Theorem and poset maps with small fibers from Discrete Morse Theory.
\end{abstract}
\begin{keyword}
Discrete Morse Theory\sep Regular trisp\sep Acyclic matching\sep Equivariant homotopy
\end{keyword}
\end{frontmatter}
\section{Introduction}
Let $n\geq 3$ and let $\Pi_n$ denote the poset consisting of all partitions of $[n]:=\{1,\dots,n\}$ ordered by refinement, such that the finer partition is the smaller partition. Let $\overline{\Pi}_n$ denote the poset obtained from $\Pi_n$ by removing both the smallest and greatest element, which are $\{\{1\},\dots,\{n\}\}$ and $\{[n]\}$, respectively. We consider $\overline{\Pi}_n$ as a category, which is acyclic, and define $\Delta(\overline{\Pi}_n)$ to the nerve of the acyclic category $\overline{\Pi}_n$, which is a regular trisp, see \cite[Chapter 10]{buch}. The symmetric group $S_n$ operates on $\Delta(\overline{\Pi}_n)$ in a natural way.

It is well-known that $\Delta(\overline{\Pi}_n)$ is homotopy equivalent to a wedge of spheres of dimension $n-3$. The following two theorems are results concerning the topology of the quotient $\Delta(\overline{\Pi}_n)/G$, where $G$ is a non-trivial subgroup of $S_n$.
\begin{thm}[Kozlov \cite{clmap}]
For any $n\geq 3$, the topological space $\Delta(\overline{\Pi}_n)/S_n$ is contractible.
\end{thm}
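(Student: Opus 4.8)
The plan is to prove contractibility by constructing an acyclic matching on the regular trisp $X:=\Delta(\overline{\Pi}_n)/S_n$ with a single critical cell; since a nonempty trisp always has a critical cell in dimension zero, this lone cell must be a vertex, and by discrete Morse theory for regular trisps (\cite[Chapter~11]{buch}) such a matching forces $X$ to collapse onto a point, whence it is contractible. The first step is the combinatorial bookkeeping: a cell of $X$ is an $S_n$-orbit of a chain $\pi_0<\pi_1<\cdots<\pi_k$ in $\overline{\Pi}_n$, one should fix notation for representatives of orbit-chains and note that the stabilizer of a chain is the intersection of the stabilizers of its partitions. The key elementary observation is that, because $\pi_0\neq\hat 0$, every partition in $\overline{\Pi}_n$ lies above an atom of the form "one block of size two, all other blocks singletons", and all these atoms form a single $S_n$-orbit $a$; the vertex $[a]$ is the candidate critical cell.

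The core of the argument is the definition of the matching. The obvious rule — pair each orbit-chain not containing $[a]$ with the one obtained by prepending a copy of $a$ below $\pi_0$ — is \emph{not} well defined on orbits, since two atoms lying in blocks of different sizes of $\pi_0$ produce non-isomorphic orbit-chains, so the induced map on cells is multivalued; this is exactly the subtlety that makes the $G=S_n$ case (and the more general matching of this paper) nontrivial. One must therefore refine the rule: attach to each $\pi_0$ a canonical block $B(\pi_0)$ to be split (for instance the largest block, ties broken by least element), match $[\pi_0<\cdots<\pi_k]$ with $[\alpha<\pi_0<\cdots<\pi_k]$ where $\alpha$ splits a two-element pair off $B(\pi_0)$, and declare a chain that already begins with such a canonically inserted atom to be the partner of the shorter chain rather than matching it further downward. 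The chains left unmatched by this first pass — those whose bottom element $\pi_0$ is itself an atom but not of the canonical shape relative to $\pi_1$ — are then handled by a secondary element-type rule applied one step higher in the chain, and the full matching is assembled as a merge of these partial matchings via the cluster/patchwork lemma for acyclic matchings (\cite[Chapter~11]{buch}), after checking that the pieces are supported on disjoint sets of cells.

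Two verifications remain. First, well-definedness and exhaustiveness of the rule on $S_n$-orbits: this is where the non-freeness of the action on chains — the very feature that rules out a one-line cone argument — must be confronted directly. (It is worth noting that $X$ is strictly larger than the order complex of the orbit poset $\overline{\Pi}_n/S_n$, which has $[a]$ as a minimum, hence is a cone and trivially contractible; so genuine work is unavoidable.) Second, acyclicity of the matching, i.e.\ that the Hasse diagram of the face poset of $X$, with the matched edges reversed, has no directed cycle; for this I would track a quantity that the insertion moves change monotonically — such as the multiset of block sizes of $\pi_0$, or the coarsest partition $\pi_k$ — so that every alternating path strictly decreases it and must terminate. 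Once acyclicity is established and $[a]$ is confirmed to be the unique critical cell, the theorem follows. I expect the first verification, making the local rule simultaneously orbit-invariant and total on cells, to be the real obstacle.
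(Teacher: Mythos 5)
First, a point of comparison: the paper does not prove this statement at all --- it is quoted from Kozlov \cite{clmap}, where the argument runs through a closure operator (a trisp closure map) on the quotient trisp, and the collapse onto the image is obtained from general machinery rather than from a hand-built matching. That machinery is precisely what sidesteps the orbit-invariance problem you identify. So there is no in-paper proof to measure your proposal against, and it has to stand on its own.

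As written it does not: it is an outline in which the two decisive steps are explicitly deferred. (1) The matching is never actually defined. Your first-pass rule (prepend an atom splitting a pair off a canonical block $B(\pi_0)$) is reasonable --- and indeed the choice of pair inside $B(\pi_0)$ is immaterial, since the stabilizer of the chain acts transitively on such pairs --- but the tie-break ``largest block, least element'' is not $S_n$-invariant: when $\pi_0$ has several blocks of maximal size, different representatives of the same cell can be sent to genuinely different orbit-chains, because the tied blocks need not be interchangeable by the stabilizer of the chain (they may sit differently inside $\pi_1,\dots,\pi_k$). More seriously, the cells left unmatched by the first pass --- chains whose bottom element is an atom splitting the wrong block of $\pi_1$ --- form a large family, and the ``secondary element-type rule one step higher'' that is supposed to dispose of them is not specified; this is exactly where the real induction has to live, and without it you cannot even confirm that $[a]$ is the unique critical cell. (2) Acyclicity is only gestured at: ``track the multiset of block sizes of $\pi_0$'' is a plausible monovariant, but it is never checked against the alternating paths of your (unspecified) matching. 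You have correctly diagnosed the obstacle, but the proposal stops at the diagnosis. If you want a matching-style proof, the cleaner route is the one this paper uses for its own main result: fiber the face poset over a small auxiliary poset by an equivariant order-preserving map and patch together matchings on the fibers as in Proposition \ref{eqpatchwork}, rather than attempting a single global insertion rule.
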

We consider the Young subgroup $S_1\times S_{n-1}:=\{\sigma\in S_n\mid\sigma(1)=1\}$.
\begin{thm}[Donau \cite{donau}]
\label{rdthm}
Let $n\geq3$ and $G\subset S_1\times S_{n-1}$ be a subgroup, then the topological space $\Delta(\overline{\Pi}_n)/G$ is homotopy equivalent to a wedge of $k$ spheres of dimension $n-3$, where $k$ is the index of $G$ in $S_1\times S_{n-1}$.
\end{thm}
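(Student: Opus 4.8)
The plan is to prove Theorem~\ref{rdthm} by discrete Morse theory for regular trisps. The first point is that $\Delta(\overline{\Pi}_n)/G$ is itself a regular trisp: every element of $G\subseteq S_1\times S_{n-1}$ fixes $1\in[n]$ and permutes the blocks of a partition preserving their sizes, so a group element stabilising a chain $\pi_1<\dots<\pi_\ell$ of $\overline{\Pi}_n$ as a set must fix each $\pi_i$ individually (the $\pi_i$ have pairwise distinct numbers of blocks), hence fixes the chain vertexwise; no cell of $\Delta(\overline{\Pi}_n)$ is therefore folded onto itself, and the cells of $\Delta(\overline{\Pi}_n)/G$ are exactly the $G$-orbits of chains of $\overline{\Pi}_n$. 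By the fundamental theorem of discrete Morse theory (see \cite[Chapter~11]{buch}) it then suffices to build an acyclic matching on $\Delta(\overline{\Pi}_n)/G$ whose critical cells are one cell of dimension $0$ together with exactly $k=[S_1\times S_{n-1}:G]$ cells of dimension $n-3$: the associated Morse complex is obtained by attaching $k$ cells of dimension $n-3$ directly to a point, there being no critical cells in the intermediate dimensions, so it is a wedge of $k$ spheres $S^{n-3}$. (For $n=3$ the trisp has no edges and the statement reduces to a direct count of points.)

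Rather than matching on the quotient directly, I would build a $G$-invariant family of matchings upstairs and descend it, using the fixed point $1$ as an anchor. Take the $G$-equivariant order-preserving map $\varphi\colon\overline{\Pi}_n\to Q$ that records a simple invariant of the block containing $1$ --- for instance its cardinality, so that $Q$ is the chain $\{1<2<\dots<n-1\}$ with trivial $G$-action. On each fibre $\varphi^{-1}(q)$ I would construct by hand an acyclic matching governed by a ``merge the block of $1$ with the next admissible block, or split the most recent such merge off again'' rule, chosen compatibly along each $G$-orbit of fibres. The auxiliary result one then needs --- the Equivariant Patchwork Theorem --- is that the union of such fibre matchings descends to a genuine acyclic matching on $\Delta(\overline{\Pi}_n)/G$; its proof should combine the usual fibrewise acyclicity argument of the Patchwork/Cluster Lemma (\cite[Chapter~11]{buch}) with the vertexwise-stabilisation observation above, which is precisely what prevents passing to $G$-orbits from creating directed cycles or fusing matched pairs.

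It then remains to read off the critical cells. Unwinding the fibre rules, a chain should survive unmatched exactly when, reading from the finest partition $\pi_1$ upward, the block of $1$ is enlarged as rarely as possible and each enlargement absorbs precisely the smallest not-yet-used element of $\{2,\dots,n\}$; in the top dimension $n-3$ such chains are indexed by the linear orders of $\{2,\dots,n\}$ modulo the residual $G$-action, i.e.\ by the $k$ cosets of $G$ in $S_1\times S_{n-1}\cong S_{n-1}$, with one further unmatched vertex coming from the bottom stratum $\varphi^{-1}(1)$. I expect the real work, and the main obstacle, to lie in two places: stating and proving the Equivariant Patchwork Theorem so that acyclicity genuinely survives the quotient, and the combinatorial bookkeeping that verifies the fibre matchings leave exactly these critical cells and that the top ones biject with $G\backslash(S_1\times S_{n-1})$. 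Granting this, the homotopy equivalence $\Delta(\overline{\Pi}_n)/G\simeq\bigvee_k S^{n-3}$, and hence Theorem~\ref{rdthm}, follows formally.
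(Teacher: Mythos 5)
Your overall framework is the right one and matches the paper's: build a $(S_1\times S_{n-1})$-equivariant acyclic matching upstairs by patchworking over the fibres of an order-preserving map keyed to the block containing $1$, descend it to $\Delta(\overline{\Pi}_n)/G$, and read off one critical vertex plus $k$ critical $(n-3)$-cells. But as written there is a genuine gap, and you have located it yourself: everything that actually proves the theorem is deferred. First, your $\varphi$ is defined on $\overline{\Pi}_n$, whereas the matching must live on the face poset ${\cal F}(\Delta(\overline{\Pi}_n))$, whose elements are chains; the cardinality of the block of $1$ varies along a chain, so $\varphi$ is not yet defined on the objects being matched, and however you repair this (value at the top or bottom of the chain) the fibres are large and the ``merge/split'' rule is not specified precisely enough to check that it is a partial matching, that it is acyclic, or that its critical cells are the ones you describe --- that verification is the entire content of the theorem. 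Second, to get exactly $k=[S_1\times S_{n-1}:G]$ top cells you need the action of $S_1\times S_{n-1}$ on the $(n-1)!$ top critical chains to be \emph{free}, not just to identify them with linear orders of $\{2,\dots,n\}$; without freeness $|C_n/G|$ could be strictly smaller than the index. The paper proves this separately (Lemma~\ref{freetrans}) and your sketch asserts the coset bijection without it.

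For comparison, the paper avoids the hard fibrewise combinatorics by a different stratification: a chain is sent to $0$ if it contains no vertex whose block of $1$ has exactly two elements, and otherwise to that (unique) vertex $v_k$. The fibre over $0$ collapses equivariantly to the single vertex $\alpha_n$ by a lemma imported from \cite{donau}, and the fibre over $v_n$ is isomorphic, via adjoining $n$ to the block of $1$, to ${\cal F}(\Delta(\overline{\Pi}_{n-1}))$ with $v_n$ appended, so its matching is obtained by induction on $n$; Proposition~\ref{eqpatchwork} then copies the matching on the $v_n$-fibre to the other fibres in its orbit and Remark~\ref{quotmatching} descends to the quotient. If you want to complete your argument along your own lines, you must either supply and verify the fibre matchings for your cardinality stratification, or switch to a recursive decomposition of this kind where the fibres are already understood.
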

This leads to a general question of determining the homotopy type of $\Delta(\overline{\Pi}_n)/G$ for an arbitrary subgroup $G\subset S_n$. One might conjecture that $\Delta(\overline{\Pi}_n)/G$ is homotopy equivalent to a wedge of spheres for any $n\geq 3$ and any subgroup $G\subset S_n$. But unfortunately this statement is not true as the following example will show: Let $p\geq 5$ be a prime number and let $C_p$ denote the subgroup of $S_p$ that is generated by the cycle $(1,2,\dots,p)$. Then the fundamental group of $\Delta(\overline{\Pi}_p)/C_p$ is isomorphic to $\Z/p\Z$. In particular $\Delta(\overline{\Pi}_p)/C_p$ cannot be homotopy equivalent to a wedge of spheres. A proof, which uses facts about covering spaces\footnote{See \cite[Chapter 1.3]{hatcher}}, can be found in \cite{torsion}.


In this paper we construct an $(S_1\times S_{n-1})$-equivariant acyclic matching on the face poset ${\cal F}(\Delta(\overline{\Pi}_n))$ of $\Delta(\overline{\Pi}_n)$ for $n\geq 3$ such that we have a description of the critical simplices. This induces an acyclic matching on $\Delta(\overline{\Pi}_n)/G$ for any subgroup $G\subset S_1\times S_{n-1}$.

Equivariant acyclic matchings are also useful to find equivariant homotopies between spaces, since there exists an equivariant version of the Main Theorem of Discrete Morse Theory, see \cite{freij}. For the construction of an equivariant acyclic matching we have similar tools as in Discrete Morse Theory. An equivariant closure operator induces an equivariant trisp closure map which induces an equivariant acyclic matching. A detailed description of the non-equivariant versions of these tools can be found in \cite{clmap}.

Consider the Young subgroup $S_{k_1}\times S_{k_2}\times\dots\times S_{k_r}\subset S_n$ with $k_1+k_2+\dots+k_r=n$. During my work on this paper Gregory Arone\footnote{See Example 1.4 in \cite{arone}} discovered the following: $\Delta(\overline{\Pi}_n)$ is $(S_{k_1}\times S_{k_2}\times\dots\times S_{k_r})$-homotopy equivalent to a wedge of $(n-1)!$ spheres of dimension $n-3$ if $\gcd(k_1,k_2,\dots,k_r)=1$. $S_{k_1}\times S_{k_2}\times\dots\times S_{k_r}$ acts freely on these spheres. This is a generalization of Corollary \ref{moeglichkeit2} and Lemma \ref{freetrans}. Notice that a preprint of this paper was already published on arXiv in 2012.
\section{Discrete Morse Theory}
The definitions of regular trisps, partial matchings, acyclic matchings and foundations of Discrete Morse Theory can be found in \cite{forman,clmap,buch}. The following two theorems of Discrete Morse Theory are frequently used in our proofs.
\begin{thm}[Patchwork Theorem]
\label{patchwork}
Let $\varphi:P\longrightarrow Q$ be an order-preserving map and assume we have acyclic matchings on the subposets $\varphi^{-1}(q)$ for all $q\in Q$. Then the union of these matchings is an acyclic matching on $P$.
\end{thm}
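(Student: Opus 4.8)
The plan is to verify the two requirements in the definition of an acyclic matching on $P$: that the union $M$ of the given fiberwise matchings is a legitimate partial matching of the Hasse diagram of $P$, and that $M$ is acyclic. The single observation that makes both parts work is that the fibers of $\varphi$ are \emph{convex} subposets.

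First I would record this convexity. If $a,b\in\varphi^{-1}(q)$ and $a\le c\le b$ in $P$, then $q=\varphi(a)\le\varphi(c)\le\varphi(b)=q$ since $\varphi$ is order-preserving, so $\varphi(c)=q$ and $c\in\varphi^{-1}(q)$. Two consequences follow. First, the fibers $\varphi^{-1}(q)$, $q\in Q$, partition the ground set of $P$, so no element is matched by more than one of the given matchings and $M$ is a well-defined partial matching of the underlying graph of the Hasse diagram of $P$. Second, a covering relation inside a subposet $\varphi^{-1}(q)$ is automatically a covering relation of $P$, since any $c\in P$ strictly between its two endpoints would, by convexity, also lie in $\varphi^{-1}(q)$. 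Hence every pair matched by $M$ is a genuine covering relation of $P$, and $M$ is a partial matching on $P$ in the required sense.

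For acyclicity I would argue by contradiction. Recall that $M$ fails to be acyclic precisely when the digraph $D_M(P)$ obtained from the Hasse diagram of $P$ by reversing the matched edges contains a directed cycle, and that any such cycle may be written
\[
a_1\to b_1\to a_2\to b_2\to\cdots\to a_m\to b_m\to a_1
\]
with $m\ge 2$, the $a_i$ pairwise distinct, each $(a_i,b_i)$ a pair of $M$ traversed upward, and each step $b_i\to a_{i+1}$ a non-matched covering relation traversed downward (indices modulo $m$). Applying $\varphi$: matched pairs lie in a common fiber, so $\varphi(a_i)=\varphi(b_i)$, while $a_{i+1}<b_i$ gives $\varphi(a_{i+1})\le\varphi(b_i)$. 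Going once around the cycle yields
\[
\varphi(a_1)=\varphi(b_1)\ge\varphi(a_2)=\varphi(b_2)\ge\cdots\ge\varphi(b_m)\ge\varphi(a_1),
\]
so all these elements of $Q$ coincide with one value $q$ and every vertex of the cycle lies in $\varphi^{-1}(q)$. By convexity every edge of the cycle is an edge of the corresponding digraph of the matching on $\varphi^{-1}(q)$, which would therefore contain a directed cycle, contradicting its acyclicity. Hence $M$ is acyclic.

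The only substantive point is the convexity observation; once it is available, the check that $M$ is a partial matching is immediate and the acyclicity argument reduces to pushing a hypothetical cycle forward along $\varphi$ and noting that it must collapse into a single fiber. I expect no estimates or case analysis to be needed.
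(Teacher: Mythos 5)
Your proof is correct: the convexity of the fibers of an order-preserving map and the collapse of a hypothetical alternating cycle into a single fiber via $\varphi$ is exactly the standard argument. The paper itself does not prove this theorem but defers to Kozlov's \emph{Combinatorial Algebraic Topology}, Chapter 11, where essentially this same proof is given, so there is nothing to add.
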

\begin{thm}
\label{morsemain}
Let $\Delta$ be a finite regular trisp and let $M$ be an acyclic matching on the poset ${\cal F}(\Delta)\setminus\{\hat{0}\}$. Let $c_i$ denote the number of critical $i$-dimensional simplices of $\Delta$. Then $\Delta$ is homotopy equivalent to a CW complex with $c_i$ cells of dimension $i$.
\end{thm}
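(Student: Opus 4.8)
The plan is to adapt Forman's proof of the Main Theorem of Discrete Morse Theory from regular CW complexes to regular trisps; the hypothesis of regularity is exactly what makes this transfer work, since it guarantees that the closure of each simplex is a standard simplex and that characteristic maps identify boundary faces homeomorphically, so that elementary collapses and cell attachments behave as in the regular CW category. First I would encode the acyclic matching $M$ as a discrete Morse function $f$ on ${\cal F}(\Delta)$, whose critical simplices are exactly the $M$-unmatched ones; we may take $f$ weakly increasing with dimension and injective on critical simplices. For a real number $a$, let $\Delta_a$ be the subtrisp of $\Delta$ generated by all simplices $\sigma$ with $f(\sigma)\le a$. These subtrisps form an increasing filtration of $\Delta$ interpolating between $\emptyset$ and $\Delta$.

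The argument then reduces to two lemmas. \textbf{Lemma A}: if $(a,b]$ contains no critical value of $f$, then $\Delta_b$ collapses onto $\Delta_a$; in particular $\Delta_b\simeq\Delta_a$. \textbf{Lemma B}: if $(a,b]$ contains a single critical value, attained at one critical simplex $\rho$ of dimension $p$, then $\Delta_b$ is homotopy equivalent to $\Delta_a$ with one $p$-cell attached along a map $\partial D^p\to\Delta_a$. Granting these, list the critical values $c_1<\dots<c_r$, pick reals $a_0<c_1<a_1<c_2<\dots<c_r<a_r$, and induct on $j$: $\Delta_{a_0}$ is empty (the hypothesis deliberately excludes $\hat{0}$ from $M$, which pins down the base case), and passing from $\Delta_{a_{j-1}}$ to $\Delta_{a_j}$ attaches exactly one cell, of dimension equal to that of the critical simplex with value $c_j$, by applying Lemma B on a small subinterval around $c_j$ and Lemma A on the two complementary ones. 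Hence $\Delta=\Delta_{a_r}$ is homotopy equivalent to a CW complex built by attaching one $i$-cell for each critical $i$-simplex, which has $c_i$ cells in each dimension $i$, as desired.

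The heart of the matter, and the step I expect to be the main obstacle, is Lemma A, equivalently the claim that the matched pairs with $f$-value in $(a,b]$ can be peeled off one at a time by \emph{elementary} collapses. The restriction of $M$ to $\{\sigma:a<f(\sigma)\le b\}$ is again an acyclic matching, and the crucial point is that such a matching always contains a pair $(\sigma,\tau)$ for which $\sigma$ is a free face of the current subtrisp, so that $\Delta'\searrow\Delta'\setminus\{\sigma,\tau\}$ is a genuine elementary collapse leaving behind a smaller regular trisp carrying a smaller acyclic matching; iterating produces the asserted collapse. Such a free pair exists precisely because $M$ is acyclic: choosing $\tau$ maximal in a linear extension of the partial order obtained by reversing the matched covering relations, among the simplices in this range matched downward, one sees that any further simplex properly containing the associated $\sigma$ would close up a directed cycle, so $\tau$ is the unique coface of $\sigma$ and $\sigma$ is free. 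Lemma B then follows by arranging, via a suitable choice of the Morse function $f$ representing $M$, that $\rho$ is the only simplex with $f$-value in a small window $(c-\epsilon,c+\epsilon)$ about its critical value $c$ and that every proper face of $\rho$ has $f$-value below that window; then $\partial\rho\subseteq\Delta_{c-\epsilon}$, so $\Delta_{c+\epsilon}=\Delta_{c-\epsilon}\cup\rho$ is an honest pushout of CW pairs, and Lemma A supplies the collapses $\Delta_{c-\epsilon}\searrow\Delta_a$ and $\Delta_b\searrow\Delta_{c+\epsilon}$. The delicate part is the bookkeeping: verifying that ``regular trisp'' and ``acyclic matching'' persist under restriction and under each elementary collapse, and that $f$ admits the genericity used above; the rest is routine.
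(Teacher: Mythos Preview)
The paper does not give its own proof of this theorem; it is quoted as background and the reader is sent to \cite[Chapter~11]{buch}. Your outline is precisely the standard Forman--Kozlov argument found there (encode $M$ by a Morse function, filter by sublevel sets, use acyclicity to locate a free face and collapse across intervals without critical values, attach one cell across each critical value), so there is nothing to compare.

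Two small wording issues that do not affect correctness: a discrete Morse function representing a nontrivial matching cannot be ``weakly increasing with dimension'', since on a matched pair $\sigma\prec\tau$ one must have $f(\sigma)\ge f(\tau)$; what you presumably want is that $f$ is generic (injective) and that on \emph{unmatched} covering relations it increases. Also, the parenthetical that excluding $\hat 0$ ``pins down the base case'' is a non sequitur: $\hat 0$ is the empty face, and its removal just says we are matching genuine simplices; the base case $\Delta_{a_0}=\emptyset$ comes simply from choosing $a_0$ below all $f$-values.
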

The proofs of Theorems \ref{patchwork} and \ref{morsemain} as well as further facts on Discrete Morse Theory can be found in \cite[Chapter 11]{buch}.
\section{An Equivariant Patchwork Theorem}
We wish to construct an equivariant acyclic matching on a poset by gluing together smaller equivariant acyclic matchings on parts of the poset. This is similar to Theorem \ref{patchwork} with the difference that we also create copies of these matchings in our construction, see Figure \ref{eqex}.
\begin{df}
Let $P$ be a poset and let $G$ be a group acting on $P$. Let $M$ be an acyclic matching on $P$. We call $M$ an \emph{$G$-equivariant acyclic matching} if $(a,b)\in M$ implies $(ga,gb)\in M$ for all $g\in G$ and $a,b\in P$.
\end{df}
Let $G$ be a group acting on some posets $P$ and $Q$. For an element $q\in Q$ we set $G_q:=\{g\in G\mid gq=q\}$, known as the stabilizer subgroup of $q$.
\begin{prop}
\label{eqpatchwork}
Let $\varphi:P\longrightarrow Q$ be an order-preserving $G$-map and let $R\subset Q$ be a subset such that $R$ contains exactly one representative for each orbit in $Q$. Assume for each $r\in R$ we have an $G_r$-equivariant acyclic matching $M_r$ on $\varphi^{-1}(r)$. For $r\in R$, let $C_r$ denote the set of critical elements of $M_r$. Then we have an $G$-equivariant acyclic matching on $P$ such that
\[
\bigcup_{g\in G, r\in R}gC_r
\]
is the set of critical elements.

Let $r\in R$ and assume $G_r$ acts transitively on $C_r$. Then $G$ acts transitively on
\[
\bigcup_{g\in G}gC_r
\]
\end{prop}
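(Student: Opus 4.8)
The plan is to construct the desired $G$-equivariant acyclic matching on $P$ by transporting each local matching $M_r$ around its orbit and taking the union, then verifying the three defining properties (partial matching, $G$-equivariance, acyclicity) together with the statement about critical elements. First I would fix, for each $r\in R$, a set of coset representatives $g_1,\dots,g_{[G:G_r]}$ for $G/G_r$, so that the $G$-orbit of the fibre $\varphi^{-1}(r)$ decomposes as the disjoint union $\bigsqcup_i g_i\varphi^{-1}(r)=\bigsqcup_i \varphi^{-1}(g_i r)$, the disjointness coming from $\varphi$ being a $G$-map and $R$ containing one representative per orbit. On each translate $\varphi^{-1}(g_i r)$ I would put the matching $g_i M_r:=\{(g_i a, g_i b)\mid (a,b)\in M_r\}$; I must first check this is well defined, i.e.\ independent of the choice of coset representative, which holds precisely because $M_r$ is $G_r$-equivariant: if $g_i' = g_i h$ with $h\in G_r$ then $g_i' M_r = g_i h M_r = g_i M_r$. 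Define $M$ to be the union of all these matchings over all $r\in R$ and all cosets; equivalently $M=\bigcup_{g\in G,\,r\in R} gM_r$.

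Next I would check that $M$ is a well-defined partial matching on $P$. Since every element $p\in P$ lies in exactly one fibre $\varphi^{-1}(q)$, and $q$ lies in the orbit of exactly one $r\in R$, the translates $\varphi^{-1}(g_i r)$ partition $P$; as each carries the single partial matching $g_i M_r$ and a partial matching pairs each element with at most one partner within its own fibre (matched pairs $(a,b)$ satisfy $\varphi(a)=\varphi(b)$ because they are comparable in the fibre), there is no conflict across fibres, so $M$ is a partial matching and its critical elements are exactly the union of the critical elements of the pieces, namely $\bigcup_{g\in G,\,r\in R} gC_r$ as claimed. For $G$-equivariance: given $g\in G$ and $(a,b)\in M$, we have $(a,b)\in g_i M_r$ for some $r$ and some coset representative $g_i$, so $(a,b)=(g_i a', g_i b')$ with $(a',b')\in M_r$; then $(ga,gb)=(gg_i a', gg_i b')$, and writing $gg_i = g_j h$ with $h\in G_r$ we get $(ga,gb)=(g_j h a', g_j h b')\in g_j M_r\subset M$, using $G_r$-equivariance of $M_r$ once more.

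For acyclicity I would appeal to the (non-equivariant) Patchwork Theorem~\ref{patchwork}: the map $\varphi\colon P\to Q$ is order-preserving, and on each fibre $\varphi^{-1}(q)$ the restriction of $M$ is an acyclic matching --- for $q=r\in R$ this is $M_r$ by hypothesis, and for $q=g_i r$ it is $g_i M_r$, which is acyclic because applying the poset automorphism $g_i$ carries an alternating cycle of $M_r$ to one of $g_iM_r$ and vice versa, so $g_iM_r$ inherits acyclicity from $M_r$. Hence $M=\bigcup_q M|_{\varphi^{-1}(q)}$ is acyclic by Theorem~\ref{patchwork}. I expect this acyclicity step to be the only place requiring a genuine external input rather than bookkeeping, and the main obstacle is really just the care needed to see that the fibres are permuted (not merely mapped into one another) by $G$ and that the local matchings glue consistently along orbits --- both of which reduce to $\varphi$ being a $G$-map together with $R$ being a transversal and $M_r$ being $G_r$-equivariant. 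Finally, for the transitivity addendum: if $G_r$ acts transitively on $C_r$, then the critical elements lying in the orbit of $\varphi^{-1}(r)$ are exactly $\bigcup_{g\in G} gC_r = \bigcup_i g_i C_r$; given two such elements $g_i c$ and $g_j c'$ with $c,c'\in C_r$, transitivity of $G_r$ gives $h\in G_r$ with $hc = c'$, and then $(g_j h g_i^{-1})(g_i c) = g_j h c = g_j c'$, so $G$ acts transitively on $\bigcup_{g\in G} gC_r$, completing the proof.
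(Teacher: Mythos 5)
Your proof is correct and follows essentially the same route as the paper: transport each $M_r$ along the $G$-action to the other fibres in its orbit, use $G_r$-equivariance to see the transported matching is independent of the choice of group element, and invoke the Patchwork Theorem for acyclicity. You simply spell out in full the well-definedness, equivariance, and transitivity checks that the paper leaves as "by construction" and "easy to see."
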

\begin{proof}
We define acyclic matchings on the fibers of $\varphi$ as follows. For each $q\in Q$ we choose $r\in R$ and $g\in G$ with $gr=q$. The map $g:\varphi^{-1}(r)\longrightarrow\varphi^{-1}(q)$, which is an isomorphism of posets, induces an acyclic matching on $\varphi^{-1}(q)$. If we choose another $h\in G$ with $hr=q$, then we obtain the same matching. By Theorem \ref{patchwork} the union of these acyclic matchings is an acyclic matching which is $G$-equivariant by construction. The second statement is easy to see.
\end{proof}
\begin{figure}[ht]
\centering
\includegraphics{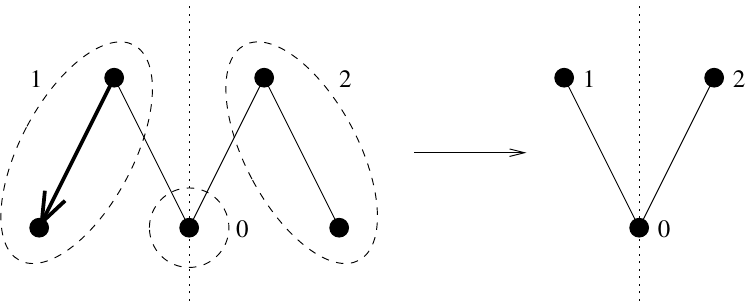}
\caption{A simple example: An $\Z_2$-equivariant acyclic matching composed of acyclic matchings on the fibers of $0$ and $1$. The matching pair in the fiber of $1$ is copied to the fiber of $2$. $\Z_2$ acts on both posets by reflection across the vertical line.}
\label{eqex}
\end{figure}
\begin{rem}
\label{quotmatching}
Let $G$ be a group acting on a finite regular trisp $\Delta$. Assume we have an $G$-equivariant acyclic matching $M$ on ${\cal F}(\Delta)\setminus\{\hat{0}\}$. Let $C$ be the set of critical simplices. Clearly we have an action of $G$ on $C$. Let $H\subset G$ be a subgroup. Then $M/H$ is an acyclic matching on ${\cal F}(\Delta/H)\setminus\{\hat{0}\}$, where $C/H$ is the set of critical simplices. In particular, if $\Delta$ is $G$-collapsible, then $\Delta/H$ is collapsible. Furthermore if $H$ is a normal subgroup, then the acyclic matching $M/H$ is $(G/H)$-equivariant.
\end{rem}
We also have an equivariant version of the Main Theorem of Discrete Morse Theory.
\begin{thm}[Freij \cite{freij}]
\label{freij}
Let $G$ be a finite group. Let $\Delta$ be a finite regular $G$-trisp and let $M$ be a $G$-equivariant acyclic matching on the poset ${\cal F}(\Delta)\setminus\{\hat{0}\}$. Let $c_i$ denote the number of critical $i$-dimensional simplices of $\Delta$. Then $\Delta$ is $G$-homotopy equivalent to a $G$-CW complex where the cells correspond to the critical simplices of $M$ and the action of $G$ is the same as the action on $\Delta$ restricted to the critical simplices of $M$.
\end{thm}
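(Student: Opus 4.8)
The plan is to reduce Freij's theorem to the non-equivariant Main Theorem (Theorem~\ref{morsemain}) by running the usual collapse argument one $G$-orbit at a time. Recall how the non-equivariant proof (see \cite[Chapter~11]{buch}) goes: one chooses a linear extension of ${\cal F}(\Delta)$ that is \emph{compatible} with $M$, meaning every matched pair $(\sigma,\tau)$ occupies two consecutive positions and every coface of $\sigma$ other than $\tau$ occurs later; such an extension exists precisely because $M$ is acyclic. Building $\Delta$ up simplex by simplex along this order, a critical simplex is a genuine cell attachment, whereas a matched pair $(\sigma,\tau)$ is attached as $\sigma$ immediately followed by $\tau$, which makes $\sigma$ a free face of the current complex; collapsing it shows that the complex obtained after attaching $\tau$ is homotopy equivalent to the one obtained before attaching $\sigma$, and iterating yields $\Delta\simeq\Delta_M$ for the CW complex $\Delta_M$ with one cell per critical simplex. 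To make this $G$-equivariant I would, using Remark~\ref{quotmatching} with $H=G$, first note that the quotient matching $M/G$ is acyclic, hence pick a compatible linear extension of the poset ${\cal F}(\Delta)/G$ of $G$-orbits of simplices, and build $\Delta$ up orbit by orbit: a critical orbit $G\sigma$ is attached as a $G$-cell of orbit type $G/G_\sigma$ in dimension $\dim\sigma$, and a matched orbit $(G\sigma,G\tau)$ as the orbit $G\sigma$ immediately followed by the orbit $G\tau$.

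For the matched-orbit step to be a $G$-homotopy equivalence one must see that at the moment the orbit $G\tau$ is attached, \emph{every} $g\sigma$ has become a free face, so that the whole orbit of free pairs can be pushed off simultaneously, a $G$-equivariant strong deformation retraction. Two things need checking. First, $G_\sigma=G_\tau$ for every matched pair: if $g\sigma=\sigma$ then $(\sigma,g\tau)\in M$, whence $g\tau=\tau$ since $M$ is a matching (and symmetrically); hence the two orbits have equal size and the attachments are genuine $G$-CW attachments. Second, and this is the crux, one must rule out \emph{within-orbit interference}: that some $g\tau$ has a facet in the orbit $G\sigma$ other than $g\sigma$. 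Translating by $g^{-1}$, such a configuration would give $h\in G$ with $h\sigma\ne\sigma$ and $h\sigma$ a facet of $\tau$; since $\sigma$ is the facet of $\tau$ matched to it, the pair $(h\sigma,\tau)$ is not in $M$, while $(h\sigma,h\tau)\in M$. Applying powers of $h$ and using that $h$ has finite order $m$ (this is where finiteness of $G$ genuinely enters), one reads off a closed directed walk
\[
\sigma\to\tau\to h\sigma\to h\tau\to h^2\sigma\to\cdots\to h^{m-1}\tau\to\sigma
\]
in the Hasse diagram of ${\cal F}(\Delta)$ with the edges of $M$ reversed, contradicting acyclicity of $M$. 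Hence no such facet exists, and together with compatibility of the chosen orbit order (which places all other facets of $g\tau$ in earlier orbits and all other cofaces of $g\sigma$ in later orbits) this gives freeness of every $g\sigma$ at the required moment.

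With these points in place, the non-equivariant bookkeeping goes through on the orbit poset: maintaining at each stage a $G$-homotopy equivalence between the subcomplex of $\Delta$ built so far and the corresponding part of $\Delta_M$, using $G$-equivariant cell attachments at critical orbits and $G$-equivariant collapses of orbits of free pairs at matched orbits, one arrives at $\Delta\simeq_G\Delta_M$, where $\Delta_M$ is the $G$-CW complex whose cells are the critical orbits and whose $G$-action is that of $\Delta$ restricted to the critical simplices; the count $c_i$ follows by dimension. As a cross-check, essentially the route of \cite{freij}, I would also verify the statement on fixed-point sets: for each subgroup $H\le G$ the fixed set $\Delta^H$ is a subtrisp, $M$ restricts to an acyclic matching on ${\cal F}(\Delta^H)\setminus\{\hat{0}\}$ (again using $G_\sigma=G_\tau$) whose critical simplices are exactly the $H$-fixed critical simplices of $M$, so Theorem~\ref{morsemain} produces on $\Delta^H$ the cells predicted by $(\Delta_M)^H$; one may then finish by the equivariant Whitehead theorem applied to the $G$-map constructed above.

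The step I expect to be the main obstacle is exactly the within-orbit interference lemma of the second paragraph: showing that no $g\tau$ acquires an unwanted facet from its own orbit, which is what rescues the ``attach $G\sigma$ then $G\tau$'' step and is also where the finiteness hypothesis on $G$ is used in an essential way. Beyond that, the only care needed is that $\Delta/G$ is in general neither a regular trisp nor a well-behaved space, so the homotopy-theoretic part of the argument must be kept on $\Delta$ itself, with ${\cal F}(\Delta)/G$ entering only combinatorially, through Remark~\ref{quotmatching}.
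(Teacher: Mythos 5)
The paper does not prove this statement at all: it is imported as a black box from Freij's \emph{Equivariant discrete Morse theory} \cite{freij}, so there is no in-paper argument to compare yours against. Judged on its own terms, your orbit-by-orbit reduction to the non-equivariant Main Theorem is essentially sound and is in the same spirit as Freij's actual argument (a sequence of equivariant elementary collapses and equivariant cell attachments governed by the induced matching on the orbit poset). You correctly isolate the two combinatorial facts that make the orbitwise collapse legitimate: that $G_\sigma=G_\tau$ for a matched pair, and the within-orbit interference lemma. Your cycle argument for the latter is right; the only cosmetic point is that if the $\langle h\rangle$-orbit of $\sigma$ has length $d$ strictly dividing the order $m$ of $h$, your length-$m$ walk revisits vertices, but it is still a closed directed walk of positive length and hence contains a genuine alternating cycle (or one simply closes up after $d$ steps), so acyclicity is violated either way.

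Two points deserve more care than you give them. First, for the attached orbits to be honest $G$-CW cells of type $G/G_\sigma\times D^i$ you need the stabilizer $G_\sigma$ to act \emph{trivially} on the cell $\sigma$, not merely to preserve it; this holds automatically for nerves of posets and acyclic categories (an order-preserving map fixing a chain fixes it vertexwise), but for a general regular $G$-trisp it is an extra admissibility hypothesis, and it is part of what ``regular $G$-trisp'' must be taken to mean in the statement. Second, the final bookkeeping step --- that at a critical orbit the cell of $\Delta_M$ is attached along the composite of the simplex's attaching map with the $G$-homotopy equivalence accumulated so far, and that replacing an attaching map by a $G$-homotopic one preserves the $G$-homotopy type --- is standard but is doing real work and should be stated as the equivariant analogue of the gluing lemma rather than waved through. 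Your appeal to Remark \ref{quotmatching} for acyclicity of $M/G$ is legitimate (that remark is combinatorial and independent of the theorem being proved), and your fixed-point-set cross-check via the equivariant Whitehead theorem is indeed close to how one would verify the conclusion independently.
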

There exists a characterization of acyclic matchings by means of order-preserving maps with small fibers, see Definition 11.3 and Theorem 11.4 in \cite[Chapter 11]{buch}. In a similar way we can also characterize $G$-equivariant acyclic matchings by means of order-preserving $G$-maps with small fibers.

For an order-preserving map with small fibers $\varphi$ let $M(\varphi)$ denote its associated acyclic matching which consists of all fibers of cardinality $2$, see \cite[Chapter 11]{buch}.
\begin{prop}
\label{smallfibers}
Let $G$ be a group acting on a finite poset $P$. For any order-preserving $G$-map $\varphi:P\longrightarrow Q$ with small fibers, the acyclic matching $M(\varphi)$ is $G$-equivariant. On the other hand, any $G$-equivariant acyclic matching $M$ on $P$ can be represented as $M=M(\varphi)$, where $\varphi:P\longrightarrow Q$ is an order-preserving $G$-map with small fibers.
\end{prop}
For two disjoint posets $(P_1,\leq_{P_1})$, $(P_2,\leq_{P_2})$ let $P_1\oplus P_2$ denote the \emph{ordinal sum}\footnote{Called \emph{stack} in \cite{buch}} of $P_1$ and $P_2$, where the set of $P_1\oplus P_2$ is $P_1\cup P_2$ and for two elements $x,y\in P_1\cup P_2$ we set $x\leq y$ if one of the following three conditions is satisfied:
\begin{enumerate}
\item $x,y\in P_1$ and $x\leq_{P_1} y$
\item $x,y\in P_2$ and $x\leq_{P_2} y$
\item $x\in P_1$ and $y\in P_2$
\end{enumerate}
Furthermore if a group $G$ acts on $P_1$ and on $P_2$, then we obtain a canonical action of $G$ on $P_1\oplus P_2$.
\begin{proof}
The first statement is easy to see. Assume we are given an $G$-equivariant acyclic matching $M$ on $P$.

We construct the poset $Q$ with an action of $G$ and the $G$-map $\varphi:P\longrightarrow Q$ inductively by adding either elements of $P$ or elements of $M$ to $Q$ in each step and extending the domain $D\subset P$ of the map $\varphi:D\longrightarrow Q$ such that $D$ has the following property: $d\geq p$ implies $p\in D$ for all $d\in D$ and all $p\in P$. We repeat these steps until we have reached $D=P$. We start with $Q=\emptyset$, $D=\emptyset$ and the trivial map $\varphi:D\longrightarrow Q$. Let $W$ denote the set of minimal elements in $P\setminus D$. We consider two cases.

First we assume one element $c\in W$ is critical. The orbit $Gc$ of $c$ is a subposet of $P$ where each element is only comparable to itself.
\begin{eqnarray*}
\widetilde\varphi:D\cup Gc&\longrightarrow&Q\oplus Gc\\
x&\longmapsto&
\begin{cases}
\varphi(x)&\text{for $x\in D$}\\
x&\text{for $x\in Gc$}\\
\end{cases}
\end{eqnarray*}
Assume $x,y\in D\cup Gc$ with $x\leq y$. $y\in D$ implies $x\in D$. $x\in D$ and $y\in Gc$ implies $\widetilde\varphi(x)\in Q$ and $\widetilde\varphi(y)\in Gc$. Hence $\widetilde\varphi$ is an order-preserving map. Now we assume $d\in D\cup Gc$, $p\in P$ with $d\geq p$. $d\in D$ in implies $p\in D$ by induction hypothesis. For $d\in Gc$ notice that $c$ is a minimal element in $P\setminus D$. Proceed with $Q=Q\oplus Gc$, $D=D\cup Gc$ and $\varphi=\widetilde\varphi$.

Now we assume that all elements in $W$ are matched. For $(a,b)\in M$ we write $a=d(b)$ and $b=u(a)$. We choose an $a\in W$ such that the only element in $W\cup u(W)$ that is smaller that $u(a)$ is $a$ itself. Such an element exists by the proof of Theorem 11.2 in \cite[Chapter 11]{buch}. We consider the orbit $G(a,u(a))$ of $(a,u(a))\in M$ as a poset where each element is only comparable to itself.
\begin{eqnarray*}
\widetilde\varphi:D\cup Ga\cup Gu(a)&\longrightarrow&Q\oplus G(a,u(a))\\
x&\longmapsto&
\begin{cases}
\varphi(x)&\text{for $x\in D$}\\
(x,u(x))&\text{for $x\in Ga$}\\
(d(x),x)&\text{for $x\in Gu(a)$}\\
\end{cases}
\end{eqnarray*}
We notice that $x<y$ implies $(x,y)\in M$ for all $x,y\in Ga\cup Gu(a)$ by the choice of $a$, hence $\widetilde\varphi(x)=\widetilde\varphi(y)$. $\widetilde\varphi$ is order-preserving and $D$ has the desired property by almost the same arguments as in the first case. Proceed with $Q=Q\oplus G(a,u(a))$, $D=D\cup Ga\cup Gu(a)$ and $\varphi=\widetilde\varphi$.

$\varphi$ is an order-preserving $G$-map with small fibers by construction. Clearly we have $M=M(\varphi)$. Notice that $\varphi$ is even surjective.
\end{proof}
\section{The main result}
Let $n\geq 3$ be a fixed natural number.
\begin{df}
Let $A$ be the set of all vertices of $\Delta(\overline{\Pi}_n)$ where all blocks not containing $1$ are singleton. We define the following set of simplices of $\Delta(\overline{\Pi}_n)$, see Figure \ref{exsimplex}.
\[
C_n:=\{\sigma\in{\cal F}(\Delta(\overline{\Pi}_n))\mid\text{$V(\sigma)\subset A$ and $\dim\sigma=n-3$}\}
\]
$V(\sigma)$ denotes the set of vertices of $\sigma$ and ${\cal F}(\Delta(\overline{\Pi}_n))$ denotes the face poset of $\Delta(\overline{\Pi}_n)$. Furthermore we set $\alpha_n$ to the vertex $\{\{1\},\{2,\dots,n\}\}$.
\end{df}
\begin{figure}[ht]
\centering
\includegraphics{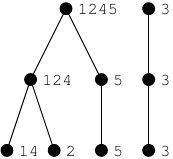}
\caption{A simplex in $C_5$ which has dimension $2$.}
\label{exsimplex}
\end{figure}
\begin{rem}
\label{card}
The cardinality of $C_n$ is $(n-1)!$.
\end{rem}
\begin{prop}
\label{main}
There exists an $(S_1\times S_{n-1})$-equivariant acyclic matching on ${\cal F}(\Delta(\overline{\Pi}_n))$, such that $C_n\cup\{\alpha_n\}$ is the set of critical simplices.
\end{prop}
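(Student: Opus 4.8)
The plan is to build the matching as a union of two pieces glued with the Equivariant Patchwork Theorem (Proposition~\ref{eqpatchwork}), obtaining the harder piece from an equivariant trisp closure map in the sense of \cite{clmap}. Write $G:=S_1\times S_{n-1}$ and decompose $\overline{\Pi}_n=P_1\sqcup P_2$, where $P_2$ is the set of partitions having $\{1\}$ as a block and $P_1=\overline{\Pi}_n\setminus P_2$. Refining a partition cannot merge the singleton $\{1\}$ into a larger block, so $P_2$ is an order ideal and $P_1$ an order filter; note that $A\subseteq P_1$ and that $\alpha_n=\max P_2$. The map $\varphi\colon\mathcal F(\Delta(\overline{\Pi}_n))\to\{0<1\}$ with $\varphi(\sigma)=0$ exactly when $V(\sigma)\subseteq P_2$ is an order-preserving $G$-map (with $G$ acting trivially on $\{0<1\}$), so by Proposition~\ref{eqpatchwork} it suffices to put a $G$-equivariant acyclic matching on each fibre: on $\varphi^{-1}(0)=\mathcal F(\Delta(P_2))$ with $\{\alpha_n\}$ as the only critical simplex, and on $\varphi^{-1}(1)=:\Sigma$, the chains meeting $P_1$, with $C_n$ as the set of critical simplices. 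The first fibre is immediate: since $\alpha_n$ is the maximum of $P_2$, the trisp $\Delta(P_2)$ is a cone with apex $\alpha_n$, and the element matching pairing $\tau$ with $\tau\cup\{\alpha_n\}$ whenever $\alpha_n\notin\tau$ is acyclic, $G$-equivariant (as $\alpha_n$ is $G$-fixed), and has $\{\alpha_n\}$ as its unique critical simplex.

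For the second fibre, which is the core of the argument, I would produce the matching from a $G$-equivariant trisp closure map $\psi$ on $\Sigma$, using the principle (\cite{clmap},\cite{freij}) that such a map yields an equivariant acyclic matching whose critical simplices are exactly the simplices fixed by $\psi$. The subcomplex of $\Delta(\overline{\Pi}_n)$ spanned by $A$ is a triangulated $(n-3)$-sphere whose facets are precisely the elements of $C_n$, and $\psi$ should push a chain of $\Sigma$ toward that subcomplex while leaving its facets stuck. Concretely, to a chain $\sigma\in\Sigma$ which is not in $C_n$ one assigns a single canonical partition $v(\sigma)$, read off from the first place at which $\sigma$, scanned upward from its smallest vertex, fails to be a maximal chain of $\Delta(A)$: the possible failures form a short canonical list — the chain still dips into $P_2$; the smallest vertex is not an atom of $A$; some vertex is not of the special shape defining $A$; the flag of blocks containing $1$ skips a size; the largest vertex is not a coatom of $A$ — and in each case the partition repairing that first failure \emph{by one step} is uniquely determined by the block data, e.g.\ at a jump $B\subsetneq B'$ in the flag of $1$-blocks one inserts the partition with blocks $B$, $B'\setminus B$ and the remaining singletons. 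One then sets $\psi(\sigma)=\sigma\cup\{v(\sigma)\}$ when $v(\sigma)\notin\sigma$ and $\psi(\sigma)=\sigma$ otherwise; equivalently the matching is the one toggling $v(\sigma)$.

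The verification then runs, in order: (i) $\varphi$ is an order-preserving $G$-map and the cone matching on $\Delta(P_2)$ is as described; (ii) $v(\sigma)$ is well defined for every non-$C_n$ chain of $\Sigma$, lies in $\overline{\Pi}_n$, and $\sigma\triangle\{v(\sigma)\}$ again lies in $\Sigma$; (iii) $v$ is $G$-equivariant, which is clear once one checks that ``first failure'', ``the block $B'\setminus B$'', and the other ingredients are defined without reference to any ordering of $\{2,\dots,n\}$; (iv) the toggling is an involution — equivalently $\psi$ is idempotent and expanding — and never sends a non-$C_n$ chain to an element of $C_n$ or back; (v) the simplices fixed by $\psi$ are exactly $C_n$; (vi) acyclicity, either from the closure-map machinery of \cite{clmap} or by checking directly that the ``failure level'' strictly decreases along every alternating path; (vii) assembling the two pieces through Proposition~\ref{eqpatchwork}. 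Since $G$ acts trivially on $\{0<1\}$, the same proposition moreover splits the critical simplices into the two $G$-orbits $\{\alpha_n\}$ and $C_n$; as the stabiliser in $G$ of a maximal flag of $A$ is trivial (such a flag is rigid), $G$ acts simply transitively on $C_n$, whence $|C_n|=(n-1)!$, in agreement with Remark~\ref{card} and Theorem~\ref{rdthm}.

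The main obstacle is the interplay of steps (ii), (iv) and (v): the repair rule must be canonical (to be $G$-equivariant), mutually coherent across the different failure types (to be an involution, equivalently for $\psi$ to be idempotent), and calibrated so that the simplices with no failure at all are exactly the maximal chains of $\Delta(A)$. The delicate points are the seams between the $P_2$-failure, the ``non-$A$ vertex'' failure and the ``skipped size'' failure, since the partition inserted to cure one of these must neither create nor destroy a failure of higher priority, and a chain already forming a full flag inside $A$ must be recognised as stuck.
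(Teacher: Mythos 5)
Your reduction step is sound, and in fact your fibre over $0$ is handled more cleanly than in the paper: splitting $\overline{\Pi}_n$ into the order ideal $P_2$ of partitions with $\{1\}$ singleton and its complementary filter does make $\varphi$ an order-preserving $(S_1\times S_{n-1})$-map, and $\Delta(P_2)$ really is a cone with apex $\alpha_n$, so that fibre carries an equivariant acyclic matching with $\alpha_n$ as its only critical cell. The problem is that this moves the entire content of the proposition into the fibre $\Sigma=\varphi^{-1}(1)$, and there you do not actually construct a matching: you list the properties a ``first-failure repair'' map $v(\sigma)$ would need and then state, correctly, that its coherence across the failure types is ``the main obstacle.'' That obstacle is not polish; it is the proof. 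Moreover, the one failure type whose repair you do indicate in the natural reading --- ``the chain still dips into $P_2$,'' detected at the bottom of the chain and repaired there --- cannot be fixed by toggling a single canonically determined vertex. Deleting the lowest $P_2$-vertex of $\{\pi_1<\pi_2<\cdots\}$ is not invertible by any canonical rule: a partition $\pi\in P_1$ generally has several incomparable $P_2$-partitions below it (for $\pi=\{1,2\}\,|\,\{3,4,5\}$ in $\overline{\Pi}_5$ there are four, three of which are permuted by the stabiliser of $\pi$), so the pairing is neither an involution nor equivariant. A coherent version exists --- toggle the \emph{maximum} of $P_2$ below $\min\rho$, which exists precisely when $\min\rho\notin V$ --- but that is a different rule, it must then be given a consistent priority against the ``non-$A$ vertex,'' ``non-atom minimum'' and ``skipped size'' repairs, and none of this is done or verified (nor is acyclicity, your step (vi)). As written, the proposal is a plan for a proof, with the hard part identified but open.

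For comparison, the paper avoids ever writing down a global repair rule by inducting on $n$ with a \emph{finer} patchwork: the target poset is $V\cup\{0\}$ where $V=\{v_2,\dots,v_n\}$ is the set of atoms of $A$, a simplex maps to the (unique) $v_k$ it contains or to $0$ if it contains none. The fibre over $0$ (larger than yours, and handled by an external lemma) has $\alpha_n$ as sole critical cell; the fibre over $v_n$, minus $v_n$ itself, is poset-isomorphic to ${\cal F}(\Delta(\overline{\Pi}_{n-1}))$ via adjoining $n$ to the $1$-block, so the induction hypothesis supplies its matching with critical cells $\psi[C_{n-1}]$ plus $\psi(\alpha_{n-1})$, the latter then matched with $v_n$; Proposition~\ref{eqpatchwork} copies this to the other fibres $v_k$ and yields $C_n\cup\{\alpha_n\}$. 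If you want to keep your two-fibre decomposition, the most economical repair is to apply the same idea inside $\Sigma$: first match away all chains whose $P_1$-minimum is not an atom of $A$ by toggling the maximal $P_2$-vertex beneath that minimum, and then observe that what remains splits over the atoms $v_k$ and can be treated by induction exactly as in the paper. Your final counting argument (simple transitivity of $G$ on $C_n$, hence $|C_n|=(n-1)!$) is fine.
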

Let $V$ be the set of all vertices where the block containing $1$ has exactly two elements and any other block is singleton. Such a vertex can be written as
\[
v_k:=\{\{1,k\},\{2\},\dots,\widehat{\{k\}},\dots,\{n\}\}
\]
with $k\in\{2,\ldots, n\}$. The element with the hat above is omitted.

We define a poset $P:=V\cup\{0\}$ such that $0$ is the smallest element of $P$ and the only element that is comparable with some other element. That means $x,y\in P$, $x<y$ implies $x=0$. We define an order-preserving map $\varphi:{\cal F}(\Delta(\overline{\Pi}_n))\longrightarrow P$ as follows. Let $\sigma\in{\cal F}(\Delta(\overline{\Pi}_n))$, then we map $\sigma$ to $0$ if $V(\sigma)\cap V=\emptyset$. Otherwise we map $\sigma$ to the special vertex of $V$ that belongs to $\sigma$, which is unique. Notice that $S_1\times S_{n-1}$ acts on $P$ in a natural way and $\varphi$ is a $(S_1\times S_{n-1})$-map. $P$ has two orbits where one consists of one element which is $0$. The other orbit may be represented by $v_n$.
\begin{lem}
\label{mitte}
There exists an $(S_1\times S_{n-1})$-equivariant acyclic matching on $\varphi^{-1}(0)$, such that $\alpha_n$ is the only critical simplex.
\end{lem}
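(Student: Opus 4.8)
The plan is as follows. Since $\varphi(\sigma)=0$ exactly when $V(\sigma)\cap V=\emptyset$, the fibre $\varphi^{-1}(0)$ is nothing but the face poset of the full subtrisp $\Delta':=\Delta(\overline{\Pi}_n\setminus V)$ of $\Delta(\overline{\Pi}_n)$ spanned by the vertices outside $V$; concretely, its simplices are the chains of the subposet $\overline{\Pi}_n\setminus V$ obtained by deleting the atoms $v_2,\dots,v_n$. I will build the required matching on ${\cal F}(\Delta')\setminus\{\hat{0}\}=\varphi^{-1}(0)\setminus\{\hat{0}\}$ in two equivariant steps: first retract $\Delta'$ onto the subcomplex of chains in which $1$ is a singleton block, and then collapse that subcomplex onto $\alpha_n$, which is a cone point there.

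For the first step, I would introduce the map $\pi:\overline{\Pi}_n\setminus V\longrightarrow\overline{\Pi}_n\setminus V$ that makes $1$ a singleton: if $B$ is the block of $p$ containing $1$, then $\pi(p)$ is the partition obtained from $p$ by replacing $B$ with the two blocks $\{1\}$ and $B\setminus\{1\}$ (just $\{1\}$ when $B=\{1\}$). The routine checks are: (i) $\pi$ takes values in $\overline{\Pi}_n\setminus V$, because the only partitions sent to $\hat{0}$ are $\hat{0}$ and the $v_k$, which have been removed, while $\pi(p)$ always has $\{1\}$ as a block so it is neither any $v_k$ nor $\hat{1}$; (ii) $\pi$ is order-preserving and idempotent with $\pi(p)\le p$, and its set of fixed points is the subposet $P':=\{p\in\overline{\Pi}_n\setminus V\mid\{1\}\text{ is a block of }p\}$; (iii) $\pi$ is $(S_1\times S_{n-1})$-equivariant, since permutations in $S_1\times S_{n-1}$ fix $1$. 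Thus $\pi$ is an equivariant closure operator (in the $\pi\le\id$ sense), so by the equivariant closure-map construction (see \cite{clmap,freij}) it induces an $(S_1\times S_{n-1})$-equivariant acyclic matching $M_1$ on ${\cal F}(\Delta')\setminus\{\hat{0}\}$ whose set of critical simplices is exactly the set of chains contained in $P'$, i.e. the simplex set of the subcomplex $\Delta(P')$.

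For the second step, I would note that $P'$ has a greatest element, namely $\alpha_n=\{\{1\},\{2,\dots,n\}\}$, so $\Delta(P')$ is a cone with apex $\alpha_n$. The element matching $N$ that pairs each nonempty chain $\sigma\subseteq P'$ with $\sigma\cup\{\alpha_n\}$ when $\alpha_n\notin\sigma$, and with $\sigma\setminus\{\alpha_n\}$ when $\alpha_n\in\sigma$ and $\sigma\neq\{\alpha_n\}$, is an acyclic matching on ${\cal F}(\Delta(P'))\setminus\{\hat{0}\}$ (matching by symmetric difference with a single vertex is always acyclic, and it is $(S_1\times S_{n-1})$-equivariant here because $\alpha_n$ is fixed), and its only critical simplex is $\alpha_n$. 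Finally I would glue $M_1$ and $N$: because $\Delta(P')$ is a subcomplex of $\Delta'$, the critical set $C$ of $M_1$ is a down-closed subset of ${\cal F}(\Delta')\setminus\{\hat{0}\}$, and then $M_1\cup N$ is again an acyclic matching — if an alternating directed cycle of $M_1\cup N$ had any matched pair in $N$, then, since $C$ is down-closed and the down-steps of the cycle cannot leave $C$ once inside it while the $C$-elements are unmatched by $M_1$, all its matched pairs would lie in $N$, making it a directed cycle of $N$, which is impossible. Hence $M_1\cup N$ is the desired $(S_1\times S_{n-1})$-equivariant acyclic matching on $\varphi^{-1}(0)$ with $\alpha_n$ as its unique critical simplex.

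I expect the only real obstacle to be the first step: verifying cleanly that $\pi$ is a well-defined equivariant closure operator on $\overline{\Pi}_n\setminus V$ and reading off the precise critical set from the closure-map construction — here the removal of the atoms $v_k$ is exactly what prevents $\pi$ from hitting $\hat{0}$, and the smallest cases (e.g. $n=3$, where $\overline{\Pi}_n\setminus V=\{\alpha_n\}$) should be checked directly. Once $\pi$ is in place, the cone collapse and the gluing are standard discrete Morse theory.
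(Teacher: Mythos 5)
Your proposal is correct, and it supplies in full the argument that the paper itself outsources: the paper's ``proof'' of Lemma~\ref{mitte} is a one-line citation to Lemma~3.2 of \cite{donau}, whereas you give a self-contained two-step construction. The steps check out: $\varphi^{-1}(0)$ is indeed the face poset of $\Delta(\overline{\Pi}_n\setminus V)$; your map $\pi$ is a well-defined, idempotent, order-preserving, descending, $(S_1\times S_{n-1})$-equivariant map whose image avoids $\hat{0}$ precisely because the atoms $v_k$ have been deleted, so the closure-map machinery of \cite{clmap} (whose equivariant upgrade the paper explicitly invokes in its introduction) yields an equivariant acyclic matching with critical set $\Delta(P')$; and $\alpha_n$ is the greatest element of $P'$, so the cone matching finishes the job. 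This is almost certainly the same mechanism as the cited proof in \cite{donau}, so I would not call it a genuinely different route --- but it is a complete one. One small streamlining: your hand-rolled acyclicity argument for $M_1\cup N$ is exactly an instance of Theorem~\ref{patchwork} applied to the order-preserving indicator map of the down-closed set $C={\cal F}(\Delta(P'))\setminus\{\hat{0}\}$ onto the two-element chain, with $M_1$ living on one fibre and $N$ on the other; citing the Patchwork Theorem there would shorten the write-up without changing the content.
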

\begin{proof}
The proof of Lemma \ref{mitte} is the same as the proof of Lemma 3.2 in \cite{donau} for the case $G=\{\id_{[n]}\}$. It is easy to see that the acyclic matching, that is constructed in this proof, is $(S_1\times S_{n-1})$-equivariant.
\end{proof}
\begin{proof}[Proof of Proposition \ref{main}]
It is easy to see, that the statement is true for $n=3$. Now we assume $n>3$ and proceed by induction.

We define acyclic matchings on $\varphi^{-1}(0)$ and $\varphi^{-1}(v_n)$ as follows. By Lemma \ref{mitte} there exists an $(S_1\times S_{n-1})$-equivariant acyclic matching on $\varphi^{-1}(0)$, where $\alpha_n$ is the only critical simplex.

We define a map
\[
\psi:{\cal F}(\Delta(\overline{\Pi}_{n-1}))\longrightarrow\varphi^{-1}(v_n)\setminus\{v_n\}
\]
as follows. We add $n$ to the block that contains $1$ in each partition and append $v_n$ to the bottom of the chain, see Figure \ref{exmapping}. The map $\psi$ is an isomorphism of posets. A similar definition of $\psi$ as well as a detailed description of its inverse can be found in the proof of Lemma 4.1 in \cite{donau}.
\begin{figure}[ht]
\centering
\includegraphics{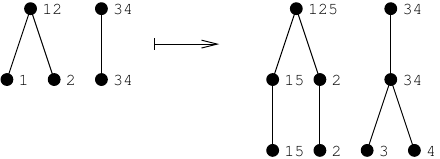}
\caption{Example for the map $\psi$, where $n=5$.} 
\label{exmapping}
\end{figure}

Via $\psi$ we get an acyclic matching $M$ on $\varphi^{-1}(v_n)$, where the set of critical simplices consists of the simplices in $\psi[C_{n-1}]$, one critical simplex $s_n$ consisting of the two vertices $v_n$ and $\{\{1,n\},\{2,\dots,(n-1)\}\}$, which has dimension $1$. Additionally we have the critical simplex that has only the vertex $v_n$. Finally we match $v_n$ with $s_n$.

We have to show that $\sigma(v_n)=v_n$ and $(a,b)\in M$ implies $(\sigma a,\sigma b)\in M$ for all $\sigma\in S_1\times S_{n-1}$ and all $a,b\in\varphi^{-1}(v_n)$. Let $\sigma\in S_1\times S_{n-1}$. $\sigma(v_n)=v_n$ implies $\sigma(1)=1$ and $\sigma(n)=n$. We define a $\widetilde\sigma\in S_1\times S_{n-2}$ by setting $\widetilde\sigma(x):=\sigma(x)$ for $1\leq x\leq n-1$. Notice $\sigma\psi=\psi\widetilde\sigma$ which implies $\psi^{-1}\sigma=\widetilde\sigma\psi^{-1}$. Let $(a,b)\in M$. Clearly we have $(v_n,s_n)=(\sigma v_n,\sigma s_n)$. Now we assume $a\not=v_n$ and $b\not=s_n$. By induction hypothesis, we have an acyclic matching $\widetilde M$ on ${\cal F}(\Delta(\overline{\Pi}_{n-1}))$ which is $(S_1\times S_{n-2})$-equivariant. By the construction of $M$ we have $(\psi(a)^{-1},\psi(b)^{-1})\in\widetilde M$. This implies $(\widetilde\sigma\psi(a)^{-1},\widetilde\sigma\psi(b)^{-1})\in\widetilde M$, hence $(\sigma a,\sigma b)\in M$.

By Proposition \ref{eqpatchwork} there exists an $(S_1\times S_{n-1})$-equivariant acyclic matching on ${\cal F}(\Delta(\overline{\Pi}_n))$ such that
\[
\left(\bigcup_{g\in S_1\times S_{n-1}}g\psi[C_{n-1}]\right)\cup\{\alpha_n\}
\]
is the set of critical elements. It is easy to see that this set equals $C_n\cup\{\alpha_n\}$.
\end{proof}
\begin{cor}
\label{moeglichkeit1}
Let $G\subset S_1\times S_{n-1}$ be a subgroup. Then there exists an acyclic matching on ${\cal F}(\Delta(\overline{\Pi}_n))/G$, such that the set of critical simplices consists of the simplices in $C_n/G$ and $\alpha_n/G$.
\end{cor}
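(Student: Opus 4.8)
The plan is to deduce this directly from Proposition \ref{main} together with Remark \ref{quotmatching}; no new combinatorial construction is needed. First I would invoke Proposition \ref{main} to obtain an $(S_1\times S_{n-1})$-equivariant acyclic matching $M$ on ${\cal F}(\Delta(\overline{\Pi}_n))$ whose set of critical simplices is exactly $C_n\cup\{\alpha_n\}$. Since $G$ is a subgroup of $S_1\times S_{n-1}$, the matching $M$ is in particular $G$-equivariant. Hence Remark \ref{quotmatching}, applied with $S_1\times S_{n-1}$ in the role of the acting group and our $G$ in the role of the subgroup $H$, yields that $M/G$ is an acyclic matching on ${\cal F}(\Delta(\overline{\Pi}_n))/G$ and that its set of critical simplices is the image in the quotient of the critical set $C_n\cup\{\alpha_n\}$ of $M$.

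It then remains to identify this image with the set consisting of the simplices in $C_n/G$ and $\alpha_n/G$. For this I would observe two things. The set $C_n$ is a union of $(S_1\times S_{n-1})$-orbits, since it is defined purely by the dimension of a simplex and the condition $V(\sigma)\subset A$, both of which are preserved by $S_1\times S_{n-1}$; in particular it is a union of $G$-orbits, so its image in the quotient is $C_n/G$. The vertex $\alpha_n=\{\{1\},\{2,\dots,n\}\}$ is fixed by every element of $S_1\times S_{n-1}$ (such a permutation fixes $1$ and permutes $\{2,\dots,n\}$ among themselves), so its image is the single simplex $\alpha_n/G$. Finally, $\alpha_n$ has dimension $0$ and $\alpha_n\notin A$, so $\alpha_n\notin C_n$; thus the two parts of the critical set are disjoint and remain separate after passing to the quotient, which gives exactly the asserted description.

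I do not expect a genuine obstacle. The only point requiring a moment's care is the implicit identification of ${\cal F}(\Delta(\overline{\Pi}_n))/G$ with the face poset of the quotient trisp $\Delta(\overline{\Pi}_n)/G$, so that the phrase ``acyclic matching on the quotient'' means the same as elsewhere in the paper; this is standard for the action in question and is already incorporated into the statement of Remark \ref{quotmatching}, so it can be cited rather than reproved. If desired, one could also remark here that combining the resulting matching with Theorem \ref{morsemain} recovers, via $|C_n|=(n-1)!$, the count of spheres in Theorem \ref{rdthm} once one knows how $G$ acts on $C_n$.
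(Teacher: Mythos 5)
Your proposal is correct and follows exactly the paper's route: the paper's entire proof is ``Apply Remark \ref{quotmatching}'' (to the equivariant matching from Proposition \ref{main}), and you do precisely that, merely spelling out the identification of the critical set in the quotient. The added observations (that $C_n$ is a union of orbits, that $\alpha_n$ is fixed by $S_1\times S_{n-1}$, and that $\alpha_n\notin C_n$) are accurate and harmless elaborations.
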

\begin{proof}
Apply Remark \ref{quotmatching}.
\end{proof}
\begin{ex}
\label{nicematchingex}
Assume $G=S_1\times S_{n-1}$. The vertices of $\Delta(\overline{\Pi}_n)/S_1\times S_{n-1}$ can be indexed with number partitions of $n$, which we may write as $v_0\oplus v_1+\dots+v_r$, that distinguish the first number, i.e.\ $\oplus$ is non-commutative. The number on the left side of $\oplus$, that is $v_0$, corresponds to the block that contains $1$. There exists an acyclic matching on the poset ${\cal F}(\Delta(\overline{\Pi}_n)/S_1\times S_{n-1})$, where the set of critical simplices consists of the vertex $1\oplus(n-1)$ and the simplex $\sigma$ of dimension $n-3$ whose vertices are $v_0\oplus 1^{n-v_0}$ with $v_0=2,\dots,n-1$.
\end{ex}
A slightly different proof of the result in Example \ref{nicematchingex}, as well as a detailed description of $\Delta(\overline{\Pi}_n)/S_1\times S_{n-1}$, can be found in \cite{donau2}.
\section{Applications}
Let $n\geq 3$ be a fixed natural number.
\begin{cor}
\label{moeglichkeit2}
The topological space $\Delta(\overline{\Pi}_n)$ is $(S_1\times S_{n-1})$-homotopy equivalent to a wedge of $(n-1)!$ spheres of dimension $n-3$. The spheres are indexed with the simplices in $C_n$, which induces an action of $S_1\times S_{n-1}$ on the $(n-1)!$ spheres.
\end{cor}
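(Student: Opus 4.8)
The plan is to combine Proposition~\ref{main} with Freij's equivariant Main Theorem of Discrete Morse Theory (Theorem~\ref{freij}), and then to recognize the resulting equivariant CW complex as a wedge of spheres.

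First I would take $G=S_1\times S_{n-1}$ and apply Proposition~\ref{main}, which supplies a $G$-equivariant acyclic matching on ${\cal F}(\Delta(\overline{\Pi}_n))$ with set of critical simplices $C_n\cup\{\alpha_n\}$. Feeding this matching into Theorem~\ref{freij} yields a $G$-homotopy equivalence between $\Delta(\overline{\Pi}_n)$ and a $G$-CW complex $X$ whose cells correspond $G$-equivariantly to the critical simplices: one $0$-cell coming from $\alpha_n$, and, by Remark~\ref{card}, exactly $(n-1)!$ cells of dimension $n-3$ coming from the simplices of $C_n$, with no cells in any other dimension. Here $C_n$ carries the restricted $S_1\times S_{n-1}$-action, since the defining condition on the set $A$ is $G$-invariant ($G$ fixes $1$), and $\alpha_n=\{\{1\},\{2,\dots,n\}\}$ is a $G$-fixed point, so the unique $0$-cell of $X$ is $G$-fixed.

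Next I would identify $X$ with $\bigvee_{\sigma\in C_n}S^{n-3}$. Suppose first $n>3$, so $n-3\geq 1$. As $X$ has no cells in dimensions $1,\dots,n-4$, its $(n-4)$-skeleton equals its $0$-skeleton, a single $G$-fixed point $\ast$. Hence the attaching map of each $(n-3)$-cell is a map $S^{n-4}\to\{\ast\}$, necessarily constant, so $X$ is precisely the wedge at $\ast$ of the $(n-1)!$ spheres $S^{n-3}$ obtained by collapsing the boundary of each $(n-3)$-cell. Since the $G$-CW structure only allows $G$ to permute the top cells (within each orbit the action on the disk factor is trivial), the $G$-action on $X$ permutes these $S^{n-3}$-summands exactly as it permutes the $G$-set $C_n$. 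The case $n=3$ is immediate: then $n-3=0$, $C_3$ has two elements, and $X$ is the $3$-point discrete space $\bigvee_2 S^0$ based at $\alpha_3$, with $S_1\times S_2$ swapping the two non-basepoint points, again matching its action on $C_3$.

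Putting the two steps together gives the asserted $(S_1\times S_{n-1})$-homotopy equivalence $\Delta(\overline{\Pi}_n)\simeq\bigvee_{\sigma\in C_n}S^{n-3}$, with the induced action on the set of $(n-1)!$ spheres being the $G$-action on the index set $C_n$; as a consistency check, passing to orbits recovers Theorem~\ref{rdthm} for the trivial subgroup, whose index in $S_1\times S_{n-1}$ is $(n-1)!$. I expect the only delicate point to be the middle step: one has to observe that the absence of cells in the intermediate dimensions forces all attaching maps of the top cells to be constant, so that the Freij complex is an honest wedge of spheres and not merely homologically one, and that the $G$-fixed $0$-cell serves as a $G$-fixed wedge point; the rest is bookkeeping with Remark~\ref{card} and the equivariance already provided by Proposition~\ref{main}.
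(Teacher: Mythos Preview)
Your proposal is correct and follows the paper's own approach: the paper's proof is the single line ``Apply Theorem~\ref{freij}'', and what you have written is precisely that application spelled out, together with the (implicit in the paper) identification of the resulting $G$-CW complex with a wedge of spheres via the observation that the only $0$-cell is $G$-fixed and all attaching maps are forced to be constant.
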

\begin{proof}
Apply Theorem \ref{freij}.
\end{proof}
\begin{lem}
\label{freetrans}
$S_1\times S_{n-1}$ acts freely and transitively on $C_n$.
\end{lem}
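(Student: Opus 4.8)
The plan is to pin down the combinatorial meaning of $C_n$ and then recognize the action of $S_1\times S_{n-1}$ as the left regular action of this group on itself. First I would describe the vertices of $A$: such a vertex is a partition whose only possibly non-singleton block is the block $B$ containing $1$, so it is completely determined by $B$, and membership in $\overline{\Pi}_n$ forces $2\le |B|\le n-1$. A simplex $\sigma\in C_n$ is a chain of $n-2$ such vertices, which under the refinement order corresponds to a family $B_0\subseteq B_1\subseteq\cdots\subseteq B_{n-3}$ of subsets of $[n]$ all containing $1$ with strictly increasing cardinalities $|B_0|<|B_1|<\cdots<|B_{n-3}|$. The key elementary observation is that these $n-2$ cardinalities are distinct integers all lying in $\{2,3,\dots,n-1\}$, a set with exactly $n-2$ elements, so necessarily $|B_i|=i+2$ for every $i$, and in particular every inclusion is strict. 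Hence $\sigma$ is exactly the data of a complete flag $\{1\}\subset B_0\subset\cdots\subset B_{n-3}\subset[n]$ of subsets through $1$; equivalently, setting $\{a_{i+1}\}:=B_i\setminus B_{i-1}$ with $B_{-1}:=\{1\}$, it is the data of an injective sequence $(a_1,\dots,a_{n-2})$ drawn from $\{2,\dots,n\}$. This incidentally recovers $|C_n|=(n-1)!$ from Remark \ref{card}, which equals $|S_1\times S_{n-1}|$.

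Second I would compute the action. For $g\in S_1\times S_{n-1}$ the simplex $g\sigma$ has flag $gB_0\subset\cdots\subset gB_{n-3}$, hence corresponds to the injective sequence $(g(a_1),\dots,g(a_{n-2}))$. Completing an injective sequence of length $n-2$ from $\{2,\dots,n\}$ to a bijection $[n-1]\to\{2,\dots,n\}$ by appending the one omitted value, the set $C_n$ is thereby put in bijection with the group of bijections $[n-1]\to\{2,\dots,n\}$, and under this bijection the action of $g$ is post-composition by $g$. This is precisely the left regular action of $S_{n-1}$ on itself, which is manifestly free and transitive; transporting it along the isomorphism $S_1\times S_{n-1}\cong S_{n-1}$ gives the lemma.

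If one prefers to avoid the appeal to ``this is obviously the regular action,'' a self-contained finish is available. For transitivity, let $\sigma_0\in C_n$ be the simplex with $B_i=\{1,2,\dots,i+2\}$; given any $\sigma\in C_n$ with associated sequence $(a_1,\dots,a_{n-1})$ (including the omitted element as $a_{n-1}$), the permutation $g$ that fixes $1$ and sends $j\mapsto a_{j-1}$ for $2\le j\le n$ lies in $S_1\times S_{n-1}$ and satisfies $g\sigma_0=\sigma$. Since $|C_n|=(n-1)!=|S_1\times S_{n-1}|$ by Remark \ref{card}, the orbit-stabilizer theorem then forces every point stabilizer to be trivial, so the action is also free. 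I do not expect any genuine obstacle here; the only step requiring care is the structural claim that each simplex of $C_n$ is a complete flag of subsets through $1$, and that is exactly the short cardinality count indicated in the first paragraph.
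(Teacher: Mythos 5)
Your proof is correct, but it takes a genuinely different route from the paper. The paper gets transitivity from its inductive construction: in the proof of Proposition \ref{main} one has $C_n=\bigcup_{g\in S_1\times S_{n-1}}g\psi[C_{n-1}]$, and the second statement of Proposition \ref{eqpatchwork} then propagates transitivity from $C_{n-1}$ to $C_n$ by induction; freeness follows, as in your fallback argument, from $|C_n|=(n-1)!=|S_1\times S_{n-1}|$ via Remark \ref{card}. You instead bypass the induction entirely by giving an explicit combinatorial description of $C_n$: the cardinality count forcing $|B_i|=i+2$ shows each critical simplex is a complete flag of subsets through $1$, i.e.\ an injective sequence in $\{2,\dots,n\}$, and the action becomes the regular action of $S_{n-1}$ on itself. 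Your argument is self-contained (it does not depend on the structure of the matching or on Proposition \ref{eqpatchwork}), it independently establishes Remark \ref{card} rather than citing it, and the explicit equivariant bijection $C_n\cong S_1\times S_{n-1}$ is strictly more information than the lemma asserts --- it would, for instance, make the description of the action on the spheres in Corollary \ref{moeglichkeit2} completely concrete. What the paper's proof buys is brevity given machinery already in place, and consistency with the inductive style of the whole argument. The only cosmetic slip in your write-up is writing $B_0\subseteq B_1\subseteq\cdots$ before observing strictness; since a chain consists of distinct comparable partitions the inclusions are strict from the outset, but your cardinality argument repairs this immediately, so nothing is at stake.
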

\begin{proof}
Since $C_n=\bigcup_{g\in S_1\times S_{n-1}}g\psi[C_{n-1}]$, the action is transitive, which follows inductively by the second statement of Proposition \ref{eqpatchwork}.

By Remark \ref{card} the cardinality of $C_n$ is $(n-1)!$ which equals the cardinality of $S_1\times S_{n-1}$. Hence the action is free.
\end{proof}
Let $G\subset S_1\times S_{n-1}$ be an arbitrary subgroup.
\begin{rem}
The cardinality of $C_n/G$ is the index of $G$ in $S_1\times S_{n-1}$.
\end{rem}
\begin{proof}
Apply Lemma \ref{freetrans}.
\end{proof}
Now Theorem \ref{rdthm} follows as a corollary. We can either apply Corollary \ref{moeglichkeit1} or Corollary \ref{moeglichkeit2}.
\begin{cor}
The topological space $\Delta(\overline{\Pi}_n)/G$ is homotopy equivalent to a wedge of spheres of dimension $n-3$. The number of spheres is the index of $G$ in $S_1\times S_{n-1}$.
\end{cor}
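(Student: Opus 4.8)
The plan is to read the homotopy type directly off the discrete Morse data already assembled, via the Main Theorem of Discrete Morse Theory. First I would apply Theorem~\ref{morsemain} to the finite regular trisp $\Delta(\overline{\Pi}_n)/G$ equipped with the acyclic matching on ${\cal F}(\Delta(\overline{\Pi}_n))/G$ supplied by Corollary~\ref{moeglichkeit1}. By that corollary the critical simplices are the vertex $\alpha_n/G$, of dimension $0$, and the simplices of $C_n/G$, each of dimension $n-3$. Hence, in the notation of Theorem~\ref{morsemain}, $c_0=1$, $c_{n-3}=|C_n/G|$ and $c_i=0$ for every other $i$, so $\Delta(\overline{\Pi}_n)/G$ is homotopy equivalent to a CW complex $Y$ with exactly one $0$-cell, exactly $|C_n/G|$ cells of dimension $n-3$, and no cells in any other dimension.

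Next I would recognize $Y$ as a wedge of spheres. If $n=3$ then $Y$ is a discrete set of $1+|C_n/G|$ points, which is a wedge of $|C_n/G|$ copies of $S^0$. If $n\geq 4$ then $Y$ has no cells in dimensions $1,\dots,n-4$, so its $(n-4)$-skeleton is the single $0$-cell; each $(n-3)$-cell is then attached along a constant map, and $Y$ is literally $\bigvee S^{n-3}$ with one wedge summand per $(n-3)$-cell. In all cases $Y$, hence $\Delta(\overline{\Pi}_n)/G$, is homotopy equivalent to a wedge of $|C_n/G|$ spheres of dimension $n-3$. Finally, the remark that $|C_n/G|$ equals the index of $G$ in $S_1\times S_{n-1}$ --- a consequence of Lemma~\ref{freetrans}, i.e.\ of the freeness and transitivity of the $(S_1\times S_{n-1})$-action on $C_n$ --- supplies the stated count.

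As noted in the text, there is an equivalent route through Corollary~\ref{moeglichkeit2}. There $\Delta(\overline{\Pi}_n)$ is $(S_1\times S_{n-1})$-homotopy equivalent to $X:=\bigvee_{\sigma\in C_n}S^{n-3}$, with $S_1\times S_{n-1}$ permuting the wedge summands exactly as it permutes $C_n$. Since a $G$-homotopy equivalence between $G$-CW complexes descends to a homotopy equivalence of orbit spaces, $\Delta(\overline{\Pi}_n)/G\simeq X/G$; and as the $G$-action on $C_n$ is free by Lemma~\ref{freetrans}, the group permutes the spheres of $X$ in free orbits while fixing the wedge point, so $X/G$ is a wedge of one copy of $S^{n-3}$ for each $G$-orbit in $C_n$, i.e.\ $[S_1\times S_{n-1}:G]$ spheres.

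There is essentially no obstacle here: the real work --- building the equivariant acyclic matching and identifying its critical simplices --- was done in Proposition~\ref{main} and Corollaries~\ref{moeglichkeit1} and~\ref{moeglichkeit2}, and what is left is bookkeeping. The only points deserving a moment's attention are the small cases $n=3$ and $n=4$, where the spheres have dimension $0$ or $1$ and the wedge description should be checked directly, and --- in the second route --- the standard fact that a $G$-homotopy equivalence passes to orbit spaces; for that reason I would write the argument up along the first route, through Corollary~\ref{moeglichkeit1} and Theorem~\ref{morsemain}.
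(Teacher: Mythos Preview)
Your proposal is correct and follows exactly the approach the paper indicates: the paper itself gives no detailed proof beyond the remark that one may apply either Corollary~\ref{moeglichkeit1} (together with Theorem~\ref{morsemain}) or Corollary~\ref{moeglichkeit2}, and you have faithfully spelled out both routes. Your handling of the small cases $n=3,4$ and of the orbit count via Lemma~\ref{freetrans} is the intended bookkeeping.
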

Now we consider the top cohomology $H^{n-3}(\Delta(\overline{\Pi}_n);\C)$ of $\Delta(\overline{\Pi}_n)$. For each simplex $\sigma\in C_n$ we obtain a $\sigma^*\in H^{n-3}(\Delta(\overline{\Pi}_n);\C)$ which is represented by a cocycle that maps $\sigma$ to $1$ and all other simplices to $0$. We obtain a basis $\{\sigma^*\mid\sigma\in C_n\}$ of the top cohomology which is called \emph{splitting basis} in \cite{wachs}.
\section*{Acknowledgments}
The author would like to thank Dmitry N. Kozlov for this interesting problem, Ragnar Freij and Giacomo d'Antonio for the helpful discussions. Furthermore the author would like to thank another anonymous reader of a previous version of this paper.
\bibliographystyle{model1-num-names}

\begin{thebibliography}{00}
\bibitem{arone}
G. Arone, \textit{A branching rule for partition complexes}, \texttt{arXiv:1508.05086 [math.AT]} (2015).
\bibitem{donau2}
R. Donau, \textit{On a quotient topology of the partition lattice with forbidden block sizes}, Topology and its Applications \textbf{159} (8) (2012), pp. 2052-2057.
\bibitem{donau}
R. Donau, \textit{Quotients of the order complex $\Delta(\overline{\Pi}_n)$ by subgroups of the Young subgroup $S_1\times S_{n-1}$}, Topology and its Applications \textbf{157} (16) (2010), pp. 2476-2479.
\bibitem{torsion}
R. Donau, \textit{Quotients of the topology of the partition lattice which are not homotopy equivalent to wedges of spheres}, \texttt{arXiv:1202.4368 [math.AT]} (2012).
\bibitem{forman}
R. Forman, \textit{Morse theory for cell complexes}, Adv. Math. \textbf{134} (1) (1998), pp. 90-145.
\bibitem{freij}
R. Freij, \textit{Equivariant discrete Morse theory}, Discrete Mathematics \textbf{309} (12) (2009), pp. 3821-3829.
\bibitem{hatcher}
A. Hatcher, \textit{Algebraic Topology}, Cambridge University Press, 2008.
\bibitem{clmap}
D.N. Kozlov, \textit{Closure maps on regular trisps}, Topology and its Applications \textbf{156} (15) (2009), pp. 2491-2495.
\bibitem{buch}
D. Kozlov, \textit{Combinatorial Algebraic Topology}, Algorithms and Computation in Mathematics \textbf{21}, Springer-Verlag Berlin Heidelberg, 2008.
\bibitem{wachs}
M.L. Wachs, \textit{On the (co)homology of the partition lattice and the free Lie algebra}, Discrete Mathematics \textbf{193} (1-3) (1998), pp. 287-319.
\end{thebibliography}

\end{document}